\newif\ifpdf
\numberwithin{equation}{section} \swapnumbers
\newtheorem{satz}{Satz}[section]
\newtheorem{theorem}[satz]{Theorem}
\newtheorem{proposition}[satz]{Proposition}
\newtheorem{definition}[satz]{Definition}
\newtheorem{remark}[satz]{Remark}
\newtheorem{example}[satz]{Example}
\newcommand{\bbr}{\mathbb{R}}
\newcommand{\bbn}{\mathbb{N}}
\newcommand{\calb}{\mathcal{B}}
\newcommand{\cald}{\mathcal{D}}
\newcommand{\call}{\mathcal{L}}
\newcommand{\calm}{\mathcal{M}}
\newcommand{\caln}{\mathcal{N}}
\newcommand{\fl}{{\rm fl}\,}
\begin{document}

\title[Flatness of invariant manifolds for SPDEs driven by L\'{e}vy processes]{Flatness of invariant manifolds for stochastic partial differential equations driven by L\'{e}vy processes}
\author{Stefan Tappe}
\address{Leibniz Universit\"{a}t Hannover, Institut f\"{u}r Mathematische Stochastik, Welfengarten 1, 30167 Hannover, Germany}
\email{tappe@stochastik.uni-hannover.de}
\begin{abstract}
The purpose of this note is to prove that the flatness of an invariant manifold for a semilinear stochastic partial differential equation driven by L\'{e}vy processes is at least equal to the number of driving sources with small jumps. We illustrate our findings by means of an example.
\end{abstract}
\keywords{Stochastic partial differential equation,
flatness of a submanifold, stochastic invariance, L\'{e}vy process with small jumps}
\subjclass[2010]{60H15, 60G51}
\maketitle

\section{Introduction}

The purpose of this note is to show that an invariant manifold for a semilinear stochastic partial differential equation (SPDE)
\begin{align}\label{SPDE}
\left\{
\begin{array}{rcl}
dr_t & = & (A r_t + \alpha(r_t)) dt + \sigma(r_t) dW_t + \gamma(r_{t-}) dX_t \medskip
\\ r_0 & = & h_0
\end{array}
\right.
\end{align}
in the spirit of \cite{P-Z-book} driven by L\'{e}vy processes with small jumps necessarily has a certain amount of flatness, that is, of linear structure.

A result which is related to the findings of our paper has been provided in \cite{Filipovic-Teichmann} for the particular case of Wiener process driven Heath-Jarrow-Morton (HJM, see \cite{HJM}) interest rate term structure models, namely that under suitable conditions an invariant manifold for the HJM equation necessarily is a foliation, that is, a collection of affine spaces.

In this paper, we deal with general SPDEs of the type (\ref{SPDE}) driven by L\'{e}vy processes, and the intuitive statement of our main results (Theorems~\ref{thm-local} and \ref{thm-global}) is that the flatness of an invariant manifold is at least equal to the number of driving sources with small jumps. 

In order to acquaint the reader with the ideas behind these results, let us present the key concepts and ideas of the proof in an informal way. Denoting by $H$ the state space of the SPDE (\ref{SPDE}), which we assume to be a separable Hilbert space, and by $\calm$ be a finite dimensional submanifold of $H$, we have the following concepts, which are explained in more detail in Section~\ref{sec-flatness} and Appendix~\ref{app-manifolds}:
\begin{itemize}
\item We call $\calm$ invariant for the SPDE (\ref{SPDE}) if for each starting point $h_0 \in \calm$ the mild solution to (\ref{SPDE}) with $r_0 = h_0$ stays on the manifold.

\item For a point $h_0 \in \calm$ the flatness of $\calm$ at $h_0$ is the largest integer $d$ such that some $d$-dimensional subspace $\call \subset H$ is contained simultaneously in all tangent spaces of the manifold $\calm$ locally around $h_0$.

\item Then, the flatness of $\calm$ is defined as the minimum over all these local quantities.
\end{itemize}
As already indicated, throughout this paper we will assume that $\calm$ is an invariant manifold. The volatility $\gamma = (\gamma^k)_{k \in K}$, where $K = \{ 1,\ldots,q \}$ with $q$ denoting the dimension of the L\'{e}vy process $X$, consists of mappings $\gamma^k : H \to H$, $k \in K$. In order to exemplify the ideas behind our result, we assume (for the sake of simplicity) that for each $k \in K$ the L\'{e}vy process $X^k$ makes arbitrary small positive jumps. Then, for each $h \in \calm$ the flatness of $\calm$ at $h$ is of the stated size, and the proof is divided into two steps:
\begin{itemize}
\item For an arbitrary $k \in K$ the volatility $\gamma^k(h)$ belongs to the tangent space to $\calm$ at $h$. Indeed, since the manifold $\calm$ is invariant, it captures every possible jump of $X^k$. Since, in addition, the L\'{e}vy process $X^k$ makes arbitrary small positive jumps, this means that for some $\epsilon > 0$ we have
\begin{align*}
h + x_k \gamma^k(h) \in \calm \quad \text{for all $x_k \in [0,\epsilon]$.}
\end{align*}
In other words, the line segment $\{ h + x_k \gamma^k(h) : x_k \in [0,\epsilon] \}$ is contained in the manifold $\calm$. From an intuitive point of view, it is clear that this implies that $\gamma^k(h)$ belongs to the tangent space to $\calm$ at $h$. We refer to Proposition~\ref{prop-jumps} for the precise formulation of this statement and its proof.

\item Due to the previous step, the linear space $\call$ generated by all $\gamma^k(h)$, $k \in K$ is contained in the tangent space to $\calm$ at $h$, which provides the desired result concerning the flatness of the manifold.
\end{itemize}
The remainder of this note is organized as follows. In Section \ref{sec-flatness} we provide the general framework and present our main results. In Section \ref{sec-example} we illustrate our main results by means of an example; namely we apply our results to the Hull-White extension of the Vasi\u{c}ek model from interest rate theory. For convenience of the reader, in Appendix \ref{app-manifolds} we provide the crucial definitions and results regarding submanifolds in Hilbert spaces.

\section{Flatness of invariant manifolds}\label{sec-flatness}

In this section, we present our main results concerning the flatness of invariant manifolds for SPDEs driven by L\'{e}vy processes.

Let $(\Omega,\mathcal{F},(\mathcal{F}_t)_{t \in \bbr_+},\mathbb{P})$ be a filtered probability space with right-continuous filtration. Let $W$ be a $p$-dimensional Wiener standard process for some $p \in \bbn_0$, and let $X$ be an $q$-dimensional L\'{e}vy process for some $q \in \bbn$, which we assume to be a purely discontinuous martingale with canonical representation $X = x * (\mu^X - \nu)$ in the sense of \cite[Cor. II.2.38]{Jacod-Shiryaev}. Here $\mu^X$ denotes the random measure associated to the jumps of $X$, which is a homogeneous Poisson random measure, and $\nu$ denotes its compensator, which is given by $\nu(dt,dx) = dt \otimes F(dx)$ with $F$ denoting the L\'{e}vy measure of $X$. We assume that $X^1,\ldots,X^q$ are independent, which implies that the L\'{e}vy measure $F$ is given by
\begin{align}\label{F-coord}
F(B) = \sum_{k=1}^q \int_{\bbr} \mathbbm{1}_B(x e_k) F^k(dx), \quad B \in \calb(\bbr^q)
\end{align}
with $e_1,\ldots,e_q$ denoting the unit vectors in $\bbr^q$, and with $F^k$ denoting the L\'{e}vy measure of $X^k$ for $k=1,\ldots,q$. We assume that
\begin{align}\label{F-square-int}
\int_{\bbr} \big( | x |^2 \vee | x |^4 \big) F^k(dx) < \infty \quad \text{for all $k = 1,\ldots,q$.}
\end{align}
The following definition identifies the set of all indices such that the corresponding L\'{e}vy process makes ``small jumps''.

\begin{definition}\label{def-index-set}
We denote by $K$ be the set of all indices $k \in \{ 1,\ldots,q \}$ such that for some $\epsilon > 0$ we have $[0,\epsilon] \subset {\rm supp}(F^k)$ or $[-\epsilon,0] \subset {\rm supp}(F^k)$.
\end{definition}

Let $H$ be a separable Hilbert space and let $A : \cald(A) \subset H \to H$ be the infinitesimal generator of a $C_0$-semigroup $(S_t)_{t \geq 0}$ on $H$. Furthermore, let $\alpha : H \to H$, $\sigma : H \to H^p$ and $\gamma : H \to H^q$ be Lipschitz continuous mappings such that $\sigma^j \in C^1(H)$ for all $j=1,\ldots,p$. We suppose that the semigroup $(S_t)_{t \geq 0}$ is pseudo-contractive, that is
\begin{align*}
\| S_t \| \leq e^{\beta t}, \quad t \geq 0
\end{align*}
for some constant $\beta \in \bbr$. Then, for each $h_0 \in H$ there exists a unique mild solution to the SPDE (\ref{SPDE}), that is, an adapted c\`{a}dl\`{a}g process $r = r^{(h_0)}$ such that
\begin{align*}
r_t &= S_t h_0 + \int_0^t S_{t-s} \alpha(r_s) ds + \sum_{j=1}^p \int_0^t S_{t-s} \sigma^j(r_s) dW_s^j
\\ &\quad + \sum_{k=1}^q \int_0^t S_{t-s} \gamma^k(r_{s-}) dX_s^k, \quad t \in \bbr_+,
\end{align*}
see, for example, \cite{P-Z-book}, \cite{Marinelli-Prevot-Roeckner} or \cite{SPDE}. For what follows, let $\calm$ be a finite dimensional $C^3$-submanifold of $H$, which we assume to be closed as a subset of $H$. We refer to Appendix \ref{app-manifolds} for details about submanifolds in Hilbert spaces.

\begin{definition}
The submanifold $\calm$ is called \emph{invariant} for (\ref{SPDE}) if for all $h_0 \in \calm$ we have $r \in \calm$ up to an evanescent set\footnote[1]{A random set $A \subset \Omega \times \mathbb{R}_+$ is called \emph{evanescent} if the set $\{ \omega \in \Omega : (\omega,t) \in A \text{ for some } t \in \mathbb{R}_+ \}$ is a $\mathbb{P}$-nullset, cf. \cite[1.1.10]{Jacod-Shiryaev}.}, where $r = r^{(h_0)}$ denotes the mild solution to (\ref{SPDE}) with $r_0 = h_0$.
\end{definition}

\begin{remark}
As our first step in order to analyze the flatness of invariant manifolds, we will write the SPDE (\ref{SPDE}) as the SPDE (\ref{SPDE-new}) below, and apply \cite[Thm.~2.8]{Manifolds}. Let us emphasize those of our previous assumptions, which we have exclusively made for an application of this result:
\begin{itemize}
\item We assume the integrability condition (\ref{F-square-int}), which ensures that condition (2.5) from \cite{Manifolds} holds true.

\item We assume that $\calm$ is a $C^3$-submanifold of $H$, and that it is closed as a subset of $H$. This assumption is also required for the mentioned result from~\cite{Manifolds}.
\end{itemize}
\end{remark}

In the sequel, we also assume that the index set $K$, which identifies all L\'{e}vy processes with ``small jumps'', is nonempty. Otherwise, no statement concerning the flatness of $\calm$ is possible, as the following counterexample shows:

\begin{example}
We consider the SPDE
\begin{align}\label{SPDE-N}
\left\{
\begin{array}{rcl}
dr_t & = & \gamma(r_{t-}) dN_t \medskip
\\ r_0 & = & h_0
\end{array}
\right.
\end{align}
on the state space $H = \bbr^2$, which -- after rewriting -- is of the form (\ref{SPDE}). Here $N$ is a Poisson process, and the volatility $\gamma : \bbr^2 \to \bbr^2$ is given by $\gamma(h) = (1,0)$ for all $h \in \bbr^2$. Then the one-dimensional submanifold
\begin{align*}
\calm = \{ (\xi,\sin(2 \pi \xi)) : \xi \in \bbr \}
\end{align*}
is invariant for (\ref{SPDE-N}), which follows from \cite[Thm.~2.11]{Manifolds}, but we have $\fl \calm(h_0) = 0$ for all $h_0 \in \calm$, showing that the flatness of $\calm$ is zero.
\end{example}

The following result shows that in case of invariance all volatilities associated to L\'{e}vy processes with ``small jumps'' are tangential to the submanifold.

\begin{proposition}\label{prop-jumps}
Suppose that the submanifold $\calm$ is invariant for (\ref{SPDE}). Then we have
\begin{align*}
\gamma^k(h) \in T_h \calm \quad \text{for all $k \in K$ and all $h \in \calm$.}
\end{align*}
\end{proposition}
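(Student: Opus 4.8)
The plan is to reduce the statement to the known characterization of invariance via the jump behaviour, and then to read off tangency by differentiating an affine curve that lies on $\calm$. Fix $k \in K$ and $h \in \calm$ throughout.

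Following the strategy announced in the preceding remark, I would first rewrite (\ref{SPDE}) in the form (\ref{SPDE-new}) and apply \cite[Thm.~2.8]{Manifolds}. The invariance of $\calm$ then furnishes the jump-consistency condition: for every $k \in \{ 1, \ldots, q \}$ and every $h \in \calm$,
\[
h + \gamma^k(h)\, x \in \calm \quad \text{for $F^k$-almost all $x \in \bbr$.}
\]
Here the coordinatewise structure (\ref{F-coord}) of the L\'{e}vy measure is essential: since $X^1, \ldots, X^q$ are independent, every jump of $X$ points along a single axis $e_k$, so that only the corresponding volatility $\gamma^k(h)$ enters a jump and the admissible jump destinations factor through the one-dimensional measures $F^k$. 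This is the step into which the entire probabilistic content is compressed --- that an invariant, closed manifold must absorb every admissible jump of the driving noise --- and I would rely on \cite[Thm.~2.8]{Manifolds} rather than redo the associated limiting argument for the mild solution at small jump times.

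Next I would pass from the almost-everywhere statement to a pointwise statement on the support. Consider $A := \{ x \in \bbr : h + \gamma^k(h)\, x \in \calm \}$. Since $\calm$ is closed in $H$ and the affine map $x \mapsto h + \gamma^k(h)\, x$ is continuous, $A$ is closed; as it carries full $F^k$-measure, it must contain ${\rm supp}(F^k)$. Hence $h + \gamma^k(h)\, x \in \calm$ for every $x \in {\rm supp}(F^k)$.

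Finally I would invoke the defining property of $K$. By Definition~\ref{def-index-set} there is $\epsilon > 0$ with $[0,\epsilon] \subset {\rm supp}(F^k)$ or $[-\epsilon,0] \subset {\rm supp}(F^k)$, so that the affine curve $c(x) = h + x\, \gamma^k(h)$ satisfies $c(x) \in \calm$ for all $x$ in a one-sided neighbourhood of $0$, with $c(0) = h$ and $c'(0) = \gamma^k(h)$. It remains to conclude $c'(0) \in T_h \calm$, and this one-sided tangency is the only genuinely delicate point of the argument: working in a submanifold chart $\Phi$ around $h$ from Appendix~\ref{app-manifolds}, the composition $\Phi \circ c$ is a curve into a fixed closed subspace, defined on a one-sided neighbourhood of $0$; its difference quotients lie in that subspace, and closedness of the subspace forces the one-sided derivative $D\Phi(h)\gamma^k(h)$ into it as well, whence $\gamma^k(h) \in T_h\calm$. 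As $k \in K$ and $h \in \calm$ were arbitrary, this proves the proposition.
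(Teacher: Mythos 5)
Your proposal is correct and takes essentially the same route as the paper: rewrite (\ref{SPDE}) as (\ref{SPDE-new}), invoke \cite[Thm.~2.8]{Manifolds} together with (\ref{F-coord}) to place the segment $\{ h + x \gamma^k(h) : x \in {\rm supp}(F^k) \}$ on $\calm$, and then differentiate it one-sidedly at $0$ using the membership of $[0,\epsilon]$ or $[-\epsilon,0]$ in ${\rm supp}(F^k)$. Two minor deviations are worth noting: your explicit upgrade from the $F^k$-a.e.\ jump condition to all of ${\rm supp}(F^k)$ via closedness of $\calm$ is a step the paper absorbs into its citation of \cite[Thm.~2.8]{Manifolds}, which it quotes directly in support form; and for the final tangency step the paper does not use a flattening chart --- indeed Appendix~\ref{app-manifolds} only supplies parametrizations, so your appeal to ``a submanifold chart $\Phi$ from Appendix~\ref{app-manifolds}'' is not literally available there --- but instead uses the special parametrization $\phi(\langle e,h \rangle) = h$ of \cite[Lemma 6.1.2]{fillnm}, under which the coordinate curve of $t \mapsto h_0 + t\gamma^k(h_0)$ is the explicit ray $y_0 + t \langle e, \gamma^k(h_0) \rangle$ and the one-sided derivative computation $\gamma^k(h_0) = D\phi(y_0) \langle e, \gamma^k(h_0) \rangle \in T_{h_0}\calm$ is immediate. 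Your chart-based version of this last step is nevertheless sound, since a $C^3$ flattening chart can be manufactured from any parametrization by the inverse function theorem, and your use of closedness of the (finite dimensional) subspace to capture the one-sided limit of difference quotients is correct, so the slip is one of citation rather than of mathematics.
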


\begin{proof}
We can write the SPDE (\ref{SPDE}) as
\begin{align}\label{SPDE-new}
\left\{
\begin{array}{rcl}
dr_t & = & (Ar_t + \alpha(r_t))dt + \sigma(r_t) dW_t
+ \int_{\bbr^q} \delta(r_{t-},x) (\mu^X(dt,dx) - F(dx) dt)
\medskip
\\ r_0 & = & h_0,
\end{array}
\right.
\end{align}
where $\delta : H \times \bbr^q \to H$ is given by
\begin{align*}
\delta(h,x) = \sum_{k=1}^q x_k \gamma^k(h), \quad (h,x) \in H \times \bbr^q.
\end{align*}
In view of (\ref{F-square-int}), all assumptions of \cite[Thm. 2.8]{Manifolds} are fulfilled, and together with (\ref{F-coord}), for each $k = 1,\ldots,q$ we obtain
\begin{align}\label{jumps-tangential}
h + x_k \gamma^k(h) \in \calm \quad \text{for all $h \in \calm$ and all $x_k \in {\rm supp}(F^k)$.}
\end{align}
Now, let $k \in K$ and $h_0 \in \calm$ be arbitrary, and let $\{ e_1,\ldots,e_m \}$ be an orthonormal basis of $T_{h_0} \calm$. According to \cite[Lemma 6.1.2]{fillnm} there exists a parametrization $\phi : V \subset \bbr^m \to U \cap \calm$ around $h_0$ such that
\begin{align}\label{para-jumps}
\phi( \langle e,h \rangle ) = h \quad \text{for all $h \in U \cap \calm$,}
\end{align}
where we use the notation $\langle e,h \rangle := (\langle e_1,h \rangle,\ldots,\langle e_m,h \rangle)$. In view of Definition \ref{def-index-set} we may assume, without loss of generality, that $[0,\epsilon] \subset {\rm supp}(F^k)$ for some $\epsilon > 0$. By (\ref{jumps-tangential}), and since $U$ is an open neighborhood of $h_0$, we obtain, after reducing $\epsilon > 0$ if necessary, that
\begin{align}\label{jumps-tangential-2}
h_0 + t \gamma^k(h_0) \in U \cap \calm \quad \text{for all $t \in [0,\epsilon]$.}
\end{align}
Setting $y_0 := \langle e,h_0 \rangle$, by taking into account (\ref{jumps-tangential-2}) and (\ref{para-jumps}) we get
\begin{align*}
\gamma^k(h_0) &= \frac{\partial}{\partial x_k} (h_0 + x_k \gamma^k(h_0)) |_{x=0} = \lim_{t \to 0} \frac{h_0 + t \gamma^k(h_0) - h_0}{t}
\\ &= \lim_{t \to 0} \frac{\phi(y_0 + t \langle e,\gamma^k(h_0) \rangle) - \phi(y_0)}{t} = D \phi(y_0) \langle e,\gamma^k(h_0) \rangle \in T_{h_0} \calm,
\end{align*}
finishing the proof.
\end{proof}

Now, we are ready to present our main results concerning the flatness of invariant manifolds.

\begin{theorem}\label{thm-local}
Suppose that the submanifold $\calm$ is invariant for (\ref{SPDE}). Suppose there exists $d \in \bbn_0$ such that for each $h_0 \in \calm$ we have
\begin{align}\label{cond-local}
d \leq \dim \bigcap_{h \in U \cap \calm} \langle \gamma^k(h) : k \in K \rangle
\end{align}
for some open neighborhood $U \subset H$ of $h_0$. 
\begin{enumerate}
\item Then, for each $h_0 \in \calm$ the following statements are true:
\begin{enumerate}
\item We have $\fl \calm(h_0) \geq d$.

\item There exist an open neighborhood $U_0 \subset H$ of $h_0$, a $d$-dimensional subspace $\call \subset H$ and a finite dimensional $C^3$-submanifold $\caln$ of $\call^{\perp}$ with $\dim \caln = \dim \calm - d$ such that $U_0 \cap \calm = U_0 \cap (\caln \oplus \call)$.

\item If $d = \dim \calm$, then $\calm$ is a local affine space generated by $\call$ around $h_0$.

\item If $d = \dim \calm - 1$, then $\calm$ is a local foliation generated by $\call$ around $h_0$.
\end{enumerate}
\item If, furthermore, the submanifold $\calm$ is connected as a topological subspace of $H$, and we have $\fl \calm(h_0) = d$ for each $h_0 \in \calm$, then the following statements are true:
\begin{enumerate}
\item We have $\fl \calm = d$.

\item There exist a $d$-dimensional subspace $\call \subset H$ and a finite dimensional $C^3$-submanifold $\caln$ of $\call^{\perp}$ with $\dim \caln = \dim \calm - d$ such that $\calm = \caln \oplus \call$.

\item If $d = \dim \calm$, then $\calm$ is an affine space generated by $\call$.

\item If $d = \dim \calm - 1$, then $\calm$ is a foliation generated by $\call$.
\end{enumerate}
\end{enumerate}
\end{theorem}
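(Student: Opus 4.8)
The plan is to deduce everything from Proposition~\ref{prop-jumps} together with one purely geometric fact about submanifolds that possess a fixed tangential direction. For part (1)(a), Proposition~\ref{prop-jumps} gives $\langle \gamma^k(h) : k \in K \rangle \subseteq T_h \calm$ for every $h \in \calm$, hence
\[
\call := \bigcap_{h \in U \cap \calm} \langle \gamma^k(h) : k \in K \rangle \subseteq \bigcap_{h \in U \cap \calm} T_h \calm .
\]
By the hypothesis (\ref{cond-local}) we have $\dim \call \geq d$, so after replacing $\call$ by any $d$-dimensional subspace of it we obtain a $d$-dimensional subspace lying in $T_h \calm$ for all $h$ in the neighbourhood $U$ of $h_0$; by the definition of flatness this already yields $\fl \calm(h_0) \geq d$.

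The heart of the matter is a local \emph{flatness lemma}: if a $d$-dimensional subspace $\call$ satisfies $\call \subseteq T_h \calm$ for all $h$ in a neighbourhood of $h_0$, then $\calm$ is locally a product in the direction of $\call$. I would prove this by observing that each constant vector field $h \mapsto v$ with $v \in \call$ is tangent to $\calm$, so that its integral curves $t \mapsto h + tv$ remain on $\calm$; since these fields commute, they generate a local translation action, giving an open neighbourhood $U_0$ of $h_0$ and $\rho > 0$ with $h + v \in \calm$ whenever $h \in U_0 \cap \calm$ and $v \in \call$, $\| v \| < \rho$. Writing $P$ for the orthogonal projection onto $\call^{\perp}$ and setting $\caln := P(U_0 \cap \calm)$, the translation invariance shows that the fibres of $P|_{U_0 \cap \calm}$ are open pieces of $\call$-cosets, so that $(n,l) \mapsto n + l$ is a $C^3$-diffeomorphism from $\caln \times (\text{ball in }\call)$ onto $U_0 \cap \calm$; this is exactly (1)(b), with $\dim \caln = \dim \calm - d$. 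Statements (1)(c) and (1)(d) are the special cases $\dim \caln = 0$ (a point, hence a local affine space) and $\dim \caln = 1$ (a curve, hence a local foliation), read off directly from the definitions in Appendix~\ref{app-manifolds}.

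For part (2) the task is to promote these local decompositions to a single global one. Here (2)(a) is immediate, since the flatness of $\calm$ is by definition the minimum of the local flatnesses, all of which equal $d$. For (2)(b) I would first render the flat subspace canonical by setting $\call(h) := \bigcap_{h' \in U \cap \calm} T_{h'} \calm$ for $U$ small; the assumption $\fl \calm(h) = d$ forces $\dim \call(h) = d$, and since $\call(h_0) \subseteq T_{h'} \calm$ for all $h'$ near $h_0$ we get $\call(h_0) = \call(h)$ for $h$ near $h_0$, so $h \mapsto \call(h)$ is locally constant; connectedness of $\calm$ then produces a single $d$-dimensional subspace $\call$ with $\call \subseteq T_h \calm$ for \emph{all} $h \in \calm$. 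Global translation invariance now follows from closedness of $\calm$: for fixed $h \in \calm$ and $v \in \call$ the set $\{ t \in \bbr : h + tv \in \calm \}$ is open (local invariance) and closed ($\calm$ closed in $H$), hence all of $\bbr$, so $\calm + \call = \calm$. Consequently each fibre $P^{-1}(n) \cap \calm$ is a full coset $n + \call$, and $\caln := P(\calm) \subseteq \call^{\perp}$ satisfies $\calm = \caln \oplus \call$, with the local charts from (1)(b) exhibiting $\caln$ as an embedded $C^3$-submanifold of dimension $\dim \calm - d$. Finally (2)(c) and (2)(d) are once more the cases $\dim \caln = 0$ and $\dim \caln = 1$.

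I expect the main obstacle to be this global step in (2)(b): verifying that the locally defined product charts glue so as to present $\caln = P(\calm)$ as a genuinely \emph{embedded} (not merely immersed) submanifold of $\call^{\perp}$, where the closedness of $\calm$ in $H$ and the connectedness hypothesis must be used carefully to rule out self-accumulation of the slices. The local flatness lemma itself, though conceptually transparent through the translation action, will also require some care to make fully rigorous in the infinite-dimensional Hilbert-space setting.
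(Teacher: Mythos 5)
Your proposal is correct in substance and shares the paper's top-level skeleton: everything is funneled through Proposition~\ref{prop-jumps}, which converts the hypothesis (\ref{cond-local}) into $\call \subset T_h \calm$ for all $h \in U \cap \calm$, after which the theorem is pure submanifold geometry. But you prove that geometric core by a genuinely different route. The paper's proof of Theorem~\ref{thm-local} is three lines: after the reduction via Proposition~\ref{prop-jumps}, part (1) is delegated to Proposition~\ref{prop-m-l-loc} and part (2) to Proposition~\ref{prop-connected}, which in turn calls Proposition~\ref{prop-m-l}. Where you obtain the local product structure from flows of constant vector fields (each line $t \mapsto h + tv$, $v \in \call$, stays on $\calm$ by uniqueness of integral curves, giving a local translation action), Proposition~\ref{prop-m-l-loc} instead uses the adapted parametrization $\phi(\langle e,h \rangle) = h$ of \cite[Lemma 6.1.2]{fillnm}, with the orthonormal basis of $T_{h_0}\calm$ chosen so that $e_1,\ldots,e_d$ span $\call$; tangency then forces $\phi$ to be affine in the $\call$-coordinates, $\phi(y_1 + y_2) = \phi_1(y_1) + T y_2$, and the decomposition $U_0 \cap \calm = U_0 \cap (\caln \oplus \call)$, the $C^3$-regularity of $\caln = U_1 \cap \Pi_{\call^\perp}\calm$, and the absence of extra sheets in the fibres of $\Pi_{\call^\perp}$ all drop out of the single chart at once --- precisely the two points you correctly flag as needing care in the flow-based argument. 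For the globalization in (2)(b) your route is cleaner than the paper's: you render the flat subspace canonical, $\call(h) = \bigcap_{h' \in U \cap \calm} T_{h'}\calm$, use $\fl \calm(h) = d$ to cap its dimension at $d$ (so it is well defined, stabilizes as $U$ shrinks, and equals the unique $d$-dimensional witness), observe that $h \mapsto \call(h)$ is locally constant, and invoke connectedness. Proposition~\ref{prop-connected} reaches the same conclusion by hand: path-connectedness, compactness of the path, and an inductive chain of overlapping neighborhoods $U_{g_{\pi(i)}}$, where on each overlap $\fl \calm(f(s)) = d$ forces $\call_{g_{\pi(i)}} = \call_{g_{\pi(k+1)}}$ --- the same dimension-cap mechanism as yours, executed along a path. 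Likewise, your open--closed argument on $\{ t \in \bbr : h + tv \in \calm \}$ is the connectedness rephrasing of the infimum/contradiction argument in Proposition~\ref{prop-m-l}, and both use closedness of $\calm$ at exactly the same point.

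There is no genuine gap. The two technical debts you acknowledge --- a locally uniform existence time $\rho$ for the translation action, and verifying that $\caln = \Pi_{\call^\perp}\calm$ is embedded rather than merely immersed, with no self-accumulation of slices --- are real but standard: the first follows from local compactness of the finite-dimensional manifold and the $C^2$-regularity of the pulled-back field $y \mapsto (D\phi(y)^* D\phi(y))^{-1} D\phi(y)^* v$, and the second is resolved globally by the full translation invariance $\calm + \call = \calm$ (each fibre is an entire coset), locally by shrinking to a product neighborhood in a chart. The adapted chart of \cite[Lemma 6.1.2]{fillnm} is exactly the device by which the paper discharges both at once, so if you want to shorten your write-up, substituting it for the flow argument is the natural move; conversely, your locally constant subspace-valued map would shorten the paper's Proposition~\ref{prop-connected} considerably.
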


\begin{proof}
Let $h_0 \in \calm$ be arbitrary. By assumption, there exists a $d$-dimensional subspace $\call_{h_0}$ such that
\begin{align*}
\call_{h_0} \subset \bigcap_{h \in U \cap \calm} \langle \gamma^k(h) : k \in K \rangle,
\end{align*}
and hence, by Proposition \ref{prop-jumps} we obtain
\begin{align*}
\call_{h_0} \subset T_h \calm \quad \text{for all $h \in U \cap \calm$.}
\end{align*}
Therefore, Proposition \ref{prop-m-l-loc} proves the first statement, and the second statement follows from Proposition \ref{prop-connected}.
\end{proof}

Theorem \ref{thm-local} shows that under condition (\ref{cond-local}) on the volatilities $(\gamma^k)_{k \in K}$ invariance of the submanifold implies the inequality $\fl \calm(h_0) \geq d$ concerning its flatness. Roughly speaking, this means that the flatness of the submanifold is at least equal to the number of driving sources with small jumps. Furthermore, the submanifold admits locally a direct sum decomposition into another manifold and a $d$-dimensional linear space. If the submanifold $\calm$ is connected and we even have equality in $\fl \calm(h_0) \geq d$, then the direct sum decomposition holds globally. The following Theorem \ref{thm-global} presents another condition, namely (\ref{cond-global}), on the volatilities $(\gamma^k)_{k \in K}$ under which such a global direct sum decomposition of the manifold holds true.

\begin{theorem}\label{thm-global}
Suppose that the submanifold $\calm$ is invariant for (\ref{SPDE}), and let $d \in \bbn_0$ be such that
\begin{align}\label{cond-global}
d \leq \dim \bigcap_{h \in \calm} \langle \gamma^k(h) : k \in K \rangle.
\end{align}
Then the following statements are true:
\begin{enumerate}
\item We have $\fl \calm \geq d$.

\item There exist a $d$-dimensional subspace $\call \subset H$ and a finite dimensional $C^3$-submanifold $\caln$ of $\call^{\perp}$ with $\dim \caln = \dim \calm - d$ such that $\calm = \caln \oplus \call$.

\item If $d = \dim \calm$, then $\calm$ is an affine space generated by $\call$.

\item If $d = \dim \calm - 1$, then $\calm$ is a foliation generated by $\call$.
\end{enumerate}
\end{theorem}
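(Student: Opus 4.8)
The plan is to exploit the fact that condition (\ref{cond-global}), in contrast to the pointwise condition (\ref{cond-local}), furnishes a \emph{single} subspace that works at every point of the manifold. First I would choose a $d$-dimensional subspace $\call \subset H$ with
\[
\call \subset \bigcap_{h \in \calm} \langle \gamma^k(h) : k \in K \rangle,
\]
which exists by the dimension estimate (\ref{cond-global}). Applying Proposition \ref{prop-jumps} to every $k \in K$ and every $h \in \calm$ then yields $\langle \gamma^k(h) : k \in K\rangle \subset T_h\calm$ for all $h\in\calm$, and hence the crucial global tangency
\[
\call \subset T_h\calm \quad \text{for all } h \in \calm.
\]
The flatness estimate $\fl\calm \geq d$ is then immediate: since the fixed $d$-dimensional space $\call$ sits inside every tangent space, it in particular sits inside all tangent spaces in any neighbourhood of any $h_0$, so $\fl\calm(h_0)\geq d$ for every $h_0\in\calm$, and taking the minimum over $h_0$ gives statement (1).

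The heart of the argument is to upgrade the infinitesimal tangency $\call\subset T_h\calm$ to the \emph{global} translation invariance
\[
h \in \calm \;\Longrightarrow\; h + \ell \in \calm \quad \text{for all } \ell\in\call .
\]
I would prove this by a continuation argument along lines parallel to $\call$. Fixing $h\in\calm$ and $\ell\in\call$, consider $I := \{t\in\bbr : h + t\ell \in \calm\}$. It contains $0$, and it is closed in $\bbr$ because $\calm$ is closed as a subset of $H$ and $t\mapsto h+t\ell$ is continuous. It is also open: at any $t_0\in I$ the point $h+t_0\ell$ lies on $\calm$, and since the constant vector field $\ell$ is tangent to $\calm$ throughout a neighbourhood (using $\call\subset T_q\calm$ for all nearby $q\in\calm$), the local invariance of a submanifold under a tangent vector field keeps $h+(t_0+s)\ell$ on $\calm$ for small $s$. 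As $\bbr$ is connected, $I=\bbr$, and specialising to $t=1$ (and rescaling $\ell$) gives the claim. This step uses connectedness of the parameter line, not of $\calm$, which is exactly why Theorem \ref{thm-global} requires neither the connectedness of $\calm$ nor the equality $\fl\calm(h_0)=d$ imposed in Theorem \ref{thm-local}(2).

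With global $\call$-invariance in hand I would set $\caln := \calm\cap\call^\perp$ and verify the decomposition $\calm = \caln\oplus\call$. Writing each $h\in H$ as $h = P_{\call^\perp}h + P_\call h$, invariance gives $h\in\calm \Leftrightarrow P_{\call^\perp}h\in\calm$, whence $\caln = P_{\call^\perp}(\calm)\subset\call^\perp$ and $\calm = \caln\oplus\call$. The remaining point is that $\caln$ is a genuine $C^3$-submanifold of $\call^\perp$ with $\dim\caln = \dim\calm - d$; this I expect to be the main technical obstacle, and it should follow from the corresponding appendix result (the global counterpart of Proposition \ref{prop-connected}) by noting that at each $n\in\caln$ one has $T_n\calm = T_n\caln\oplus\call$, so that projecting a parametrisation of $\calm$ onto $\call^\perp$ and factoring out the flat directions produces a $C^3$-chart for $\caln$ of the asserted dimension.

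Finally, statements (3) and (4) are read off from the decomposition: when $d=\dim\calm$ the base $\caln$ is $0$-dimensional and $\calm$ is an affine space generated by $\call$, and when $d=\dim\calm-1$ the base $\caln$ is $1$-dimensional and $\calm$ is a foliation generated by $\call$, in the terminology of the appendix. I anticipate that once the submanifold structure of $\caln$ is secured the special cases are purely bookkeeping, so the genuine difficulty is concentrated in the globalisation step and in the verification of the regularity and dimension of $\caln$.
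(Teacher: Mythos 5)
Your proposal is correct and follows essentially the paper's own route: choose a $d$-dimensional $\call$ inside the intersection, apply Proposition \ref{prop-jumps} to get $\call \subset T_h\calm$ for all $h \in \calm$, and then globalize the tangency to the decomposition $\calm = \caln \oplus \call$ using closedness of $\calm$ -- your open/closed continuation along lines parallel to $\call$ is the same mechanism as the paper's infimum argument. The paper packages exactly this continuation step, the identification $\caln = \Pi_{\call^{\perp}}\calm$, and the $C^3$-regularity and dimension of $\caln$ (the two points you leave at sketch level) as Proposition \ref{prop-m-l}, with the local work done in Proposition \ref{prop-m-l-loc}; that is the result to invoke here, rather than a ``global counterpart of Proposition \ref{prop-connected}'', since Proposition \ref{prop-connected} concerns the connectedness argument needed only for Theorem \ref{thm-local}(2).
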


\begin{proof}
By assumption, there exists a $d$-dimensional subspace $\call$ such that
\begin{align*}
\call \subset \bigcap_{h \in \calm} \langle \gamma^k(h) : k \in K \rangle,
\end{align*}
and hence, by Proposition \ref{prop-jumps} we obtain
\begin{align*}
\call \subset T_h \calm \quad \text{for all $h \in \calm$.}
\end{align*}
Therefore, Proposition \ref{prop-m-l} concludes the proof.
\end{proof}

\section{An example: The L\'{e}vy driven Hull-White extension of the Vasi\u{c}ek model}\label{sec-example}

For the sake of illustration of our previous results, we present an example from mathematical finance, which concerns the modeling of interest rate curves, namely the L\'{e}vy driven Hull-White extension of the Vasi\u{c}ek model, which is an example of the so-called HJMM (Heath-Jarrow-Morton-Musiela) equation
\begin{align}\label{HJMM}
\left\{
\begin{array}{rcl}
d r_t & = & (\frac{d}{d\xi} r_t + \alpha_{\rm HJM}(r_t)) dt + \gamma(r_{t-}) dX_t \medskip
\\ r_0 & = & h_0.
\end{array}
\right.
\end{align}
Here the state space is a suitable Hilbert space $H$ consisting of functions $h : \bbr_+ \to \bbr$ (see, for example, \cite[Sec. 5]{fillnm}), and $\frac{d}{d\xi}$ is the differential operator, which is generated by the translation semigroup on $H$. We refer, e.g., to \cite{Filipovic-Tappe, Barski, P-Z-paper, Marinelli} for the L\'{e}vy driven HJMM equation. In this section, we assume that the L\'{e}vy process is one-dimensional and has the canonical representation $X = W + x * (\mu^X - \nu)$ with a standard Wiener process $W$ such that for some $\epsilon > 0$ we have $[0,\epsilon] \subset {\rm supp}(F)$ or $[-\epsilon,0] \subset {\rm supp}(F)$, where $F$ denotes the L\'{e}vy measure of $X$. For the Hull-White extension of the Vasi\u{c}ek model the volatility $\gamma : H \to H$ is constant, that is $\gamma(h_1) = \gamma(h_2)$ for all $h_1,h_2 \in H$. Therefore, and since $H$ consists of functions mapping $\bbr_+$ to $\bbr$, we agree to write $\gamma(\xi)$ instead of $(\gamma(h))(\xi)$ for $\xi \in \bbr_+$. With this convention, the volatility $\gamma \in H$ is given by
\begin{align*}
\gamma(\xi) = \rho \cdot \exp(-c \xi), \quad \xi \in \bbr_+
\end{align*}
with constants $\rho \neq 0$ and $c \in \bbr$. The drift $\alpha_{\rm HJM} \in H$ is constant as well, and it is given by the HJM drift condition
\begin{align*}
\alpha_{\rm HJM} = - \gamma \cdot \Psi' \bigg( - \int_0^{\bullet} \gamma(\xi) d\xi \bigg),
\end{align*}
where $\Psi$ denotes the cumulant generating function of the L\'{e}vy process $X$. Now, let $\calm$ be a two-dimensional submanifold, which is invariant for (\ref{HJMM}). Then, according to Theorem \ref{thm-global} the submanifold $\calm$ is a foliation generated by $\call = \langle \xi \mapsto \exp(-c \xi) \rangle$. Consequently, for the L\'{e}vy driven Hull-White extension of the Vasi\u{c}ek model with small jumps, every invariant manifold must necessarily be a foliation. It is well-known that, conversely, the Hull-White extension of the Vasi\u{c}ek model admits a two-dimensional realization, that is, for every $h_0 \in \cald(d/d\xi)$ there exists a two-dimensional invariant manifold with $h_0 \in \calm$, where the invariant manifolds are foliations generated by $\call$. For the L\'{e}vy driven case, we refer, for example, to \cite{Tappe-Levy}.

\section*{Acknowledgement}

I am grateful to an anonymous referee for his/her valuable comments and suggestions.

\begin{appendix}

\section{Finite dimensional submanifolds in Hilbert spaces}\label{app-manifolds}

In this appendix, we provide the required results about finite dimensional submanifolds in Hilbert spaces. Let $H$ be a Hilbert space and let $k,m \in \bbn$ be positive integers.

\begin{definition}\label{def-submanifold}
A nonempty subset $\calm \subset H$ is a \emph{$m$-dimensional
$C^k$-submanifold of $H$}, if for all $h_0 \in
\mathcal{M}$ there exist an open neighborhood $U \subset H$ of $h_0$, an open subset $V \subset \mathbb{R}^m$ and a map $\phi \in C^k(V;H)$ such
that
\begin{enumerate}
\item $\phi : V \rightarrow U \cap \mathcal{M}$ is a homeomorphism;
\item $D \phi(y)$ is one to one for all $y \in V$.
\end{enumerate}
The map $\phi$ is called a \emph{parametrization} of $\mathcal{M}$
around $h_0$.
\end{definition}

For what follows, let $\calm$ be a $m$-dimensional $C^k$-submanifold of $H$. For the purpose of this paper, we require the notion of the flatness of $\calm$, which is defined as follows.

\begin{definition}
For $h_0 \in \calm$ we define the \emph{flatness} of $\calm$ at $h_0$, denoted by $\fl \calm(h_0)$, as the largest integer $d \in \{ 0,\ldots,m \}$ such that for some $d$-dimensional subspace $\call \subset H$ and some open neighborhood $U$ of $h_0$ we have
\begin{align*}
\call \subset T_h \calm \quad \text{for all $h \in U \cap \calm$.}
\end{align*}
\end{definition}

\begin{definition}
We call $\fl \calm := \min_{h \in \calm} \fl \calm(h)$ the \emph{flatness} of $\calm$.
\end{definition}

\begin{remark}
A similar notion, which also measures the amount of flatness of a manifold, is the rank, which is defined for complete Riemannian manifolds. We refer, for example, to \cite{BBE}, \cite{Ballmann} or \cite{Spatzier} for the precise definition.  
\end{remark}

\begin{definition}
Let $\call \subset H$ be a finite dimensional subspace.
\begin{enumerate}
\item $\calm$ is an \emph{affine space} generated by $\call$ if there exists an element $g_0 \in \call^{\perp}$ such that $\calm = g_0 \oplus \call$.

\item $\calm$ is a \emph{foliation} generated by $\call$ if there exists a one-dimensional $C^k$-submanifold $\caln$ of $\call^{\perp}$ such that $\calm = \caln \oplus \call$.
\end{enumerate}
\end{definition}

\begin{definition}
Let $\call \subset H$ be a finite dimensional subspace, and let $h_0 \in \calm$ be arbitrary.
\begin{enumerate}
\item $\calm$ is a \emph{local affine space} generated by $\call$ around $h_0$ if there exist an open neighborhood $U$ of $h_0$ and an element $g_0 \in \call^{\perp}$ such that $U \cap \calm = U \cap (g_0 \oplus \call)$.

\item $\calm$ is a \emph{local foliation} generated by $\call$ around $h_0$ if there exist an open neighborhood $U$ of $h_0$ and a one-dimensional $C^k$-submanifold $\caln$ of $\call^{\perp}$ such that $U \cap \calm = U \cap (\caln \oplus \call)$.
\end{enumerate}
\end{definition}

\begin{proposition}\label{prop-m-l-loc}
Let $h_0 \in \calm$ be arbitrary, let $\call \subset H$ be a subspace and let $U \subset H$ be an open neighborhood of $h_0$ such that 
\begin{align}\label{L-tang}
\call \subset T_h \calm \quad \text{for all $h \in U \cap \calm$.} 
\end{align}
Then, denoting by $h_0 = h_1 + h_2$ the direct sum decomposition of $h_0$ according to $H = \call^{\perp} \oplus \call$, there exist open neighborhoods $U_1 \subset \call^{\perp}$ of $h_1$ and $U_2 \subset \call$ of $h_2$ such that $U_0 := U_1 \oplus U_2$ is an open neighborhood of $h_0$ satisfying the following conditions:
\begin{enumerate}
\item We have $U_0 \cap \calm = U_0 \cap ((U_0 \cap \calm) + \call)$.

\item The subset $\caln := U_1 \cap \Pi_{\call^{\perp}} \calm$ is a $C^k$-submanifold of $\call^{\perp}$ with $\dim \caln = \dim \calm - \dim \call$, and we have $U_0 \cap \calm = U_0 \cap (\caln \oplus \call)$.
\end{enumerate}
\end{proposition}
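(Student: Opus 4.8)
The plan is to exploit that hypothesis (\ref{L-tang}) declares a \emph{constant} subspace $\call$ to be tangent to $\calm$ throughout $U \cap \calm$; the associated tangent distribution is therefore trivially involutive, and the whole geometric content is that $\calm$ is locally invariant under translations in the directions of $\call$. My first step is to make this precise: I claim there is a radius $\rho > 0$ such that whenever $h \in \calm$ lies close to $h_0$ and $v \in \call$ with $\| v \| < \rho$, one has $h + v \in \calm$. To see this, fix such an $h$ and a unit vector $v \in \call$, and use a parametrization $\phi$ of $\calm$ around $h$. Since $v \in \call \subset T_{\phi(y)} \calm = \mathrm{im}\, D\phi(y)$ for every $y$ by (\ref{L-tang}), the vector field $y \mapsto (D\phi(y))^{-1} v$ (the preimage under the injective map $D\phi(y)$, i.e.\ the Moore--Penrose preimage, which is well defined and continuous because $v$ lies in the finite-dimensional image) admits an integral curve $y(\cdot)$ by Peano's theorem. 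Then $t \mapsto \phi(y(t))$ and $t \mapsto h + tv$ solve the same initial value problem in $H$, whence $\phi(y(t)) = h + tv \in \calm$ for small $t$; a standard open--closed argument in $t$ (closedness uses that a submanifold is locally closed) upgrades this to the asserted uniform statement.

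Next I would construct a product parametrization. Since $\call \subset T_{h_0} \calm$, I choose an orthonormal basis $e_1, \ldots, e_m$ of $T_{h_0} \calm$ with $e_1, \ldots, e_\ell$ an orthonormal basis of $\call$, where $\ell = \dim \call$, and take the adapted parametrization $\phi$ from \cite[Lemma 6.1.2]{fillnm}, which satisfies $\langle e_i, \phi(y) \rangle = y_i$. Writing $y = (s, y')$ with $s \in \bbr^\ell$ and $y' \in \bbr^{m - \ell}$, this gives $\Pi_{\call} \phi(0, y') = \sum_{i=1}^\ell \langle e_i, \phi(0, y') \rangle e_i = 0$, so the slice $\psi(y') := \phi(0, y')$ takes values in $\call^{\perp}$; as a restriction of $\phi$ it is an injective $C^k$ immersion and a homeomorphism onto its image, hence parametrizes an $(m-\ell)$-dimensional $C^k$-submanifold $\caln$ of $\call^{\perp}$. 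I then set $\Theta(s, y') := \psi(y') + \sum_{i=1}^\ell s_i e_i$. By the first step its image lies in $\calm$, and the block structure of $D\Theta$ (columns $e_i \in \call$ and $D\psi(y') \in \call^{\perp}$) together with $H = \call^{\perp} \oplus \call$ shows $\Theta$ is an injective immersion and a homeomorphism onto its image; being $m$-dimensional, this image is open in $\calm$, so $\Theta$ is a parametrization of $\calm$ around $h_0$ realizing the product $\caln \oplus \call$.

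Finally I would read off the two claims from this chart. Shrinking the domain of $\Theta$ to a product of balls and pushing forward, I select open neighborhoods $U_1 \subset \call^{\perp}$ of $h_1$ and $U_2 \subset \call$ of $h_2$ so that $U_0 := U_1 \oplus U_2$ is a box on which $U_0 \cap \calm$ coincides with $\mathrm{im}\, \Theta$ and $U_1 \cap \Pi_{\call^{\perp}} \calm = \caln$. Claim (1) is then immediate: the inclusion $\subseteq$ is trivial, while for $\supseteq$ any $h \in U_0 \cap \calm$ and $v \in \call$ with $h + v \in U_0$ satisfy $h + v \in \calm$ by the first step, since translations remain inside the box. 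Claim (2) follows because, for $h = a + b \in U_0$ with $a \in U_1$ and $b \in U_2$, membership $h \in \calm$ is equivalent to $a \in \caln$: the forward direction by projecting with $\Pi_{\call^{\perp}}$, the backward direction by adding $b$ to a point of $\calm$ in the $\call$-fibre over $a$ via the first step.

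The conceptual heart is the translation-invariance step, but it is made easy by the constancy of $\call$, so that the distribution is automatically integrable. The genuinely delicate part is the bookkeeping in the last step: one must arrange the rectangular neighborhood $U_0 = U_1 \oplus U_2$ to be \emph{saturated}, meaning that the $\call$-fibre over every point of $\caln = U_1 \cap \Pi_{\call^{\perp}} \calm$ actually meets $U_0 \cap \calm$. This is precisely what forces the exact equalities $U_0 \cap \calm = U_0 \cap (\caln \oplus \call)$ and $\caln = U_1 \cap \Pi_{\call^{\perp}} \calm$, and it requires shrinking $U_1$ and $U_2$ compatibly, using the uniform translation radius $\rho$ from the first step.
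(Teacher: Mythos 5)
Your proposal is correct in its overall architecture and in fact shares the paper's skeleton: both proofs rest on the adapted parametrization of \cite[Lemma 6.1.2]{fillnm} with an orthonormal basis of $T_{h_0} \calm$ whose first $p := \dim \call$ vectors span $\call$, and both then exhibit the local product structure on a box $U_0 = U_1 \oplus U_2$. Where you genuinely deviate is the translation-invariance step: you integrate the pseudo-inverse field $y \mapsto D\phi(y)^{+}v$ via Peano and identify $\phi(y(t)) = h + tv$. This is sound as far as it goes (continuity of $y \mapsto D\phi(y)^{+}$ holds because $D\phi(y)$ is injective with finite-dimensional image, and the identification needs no uniqueness theorem, since $\frac{d}{dt}\phi(y(t)) = D\phi(y(t))D\phi(y(t))^{+}v = v$ follows from $v \in \mathrm{im}\, D\phi(y(t))$), but it is heavier than necessary: in the adapted chart, $\langle e_j, \phi(y)\rangle = y_j$ forces any $u$ with $D\phi(y)u = e_i$, $i \leq p$, to have coordinates $u_j = \langle e_j, e_i \rangle = \delta_{ij}$, so hypothesis (\ref{L-tang}) pins $\partial \phi / \partial y_i = e_i$ identically, and translation invariance follows by integrating a constant field. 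That is essentially how the paper proceeds, via the restrictions $\phi_1, \phi_2$ and the isomorphism $T$ with $\phi(y_1 + y_2) = \phi_1(y_1) + Ty_2$. One minor slip: your slice $\psi(y') := \phi(0, y')$ presupposes that the $\call$-coordinates of $h_0$ vanish; either translate by $-h_2$ first (the hypothesis is invariant under translations by elements of $\call$) or take the slice at $s = \langle e, h_2 \rangle$.

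The step that actually fails is the one you yourself single out as delicate: no shrinking of $U_1, U_2$ can in general achieve $U_1 \cap \Pi_{\call^{\perp}} \calm = \caln$, because $\Pi_{\call^{\perp}} \calm$ projects \emph{all} of $\calm$, including portions outside $U$ on which (\ref{L-tang}) imposes nothing. Concretely, in $H = \bbr^3$ with $\call$ the $z$-axis, let $\calm$ be the disjoint union of $\{(x,0,z) : |x|<1,\, |z|<1\}$ and $\{(0,y,z) : |y|<1,\, 5<z<6\}$ and take $h_0 = 0$: all hypotheses hold, yet $U_1 \cap \Pi_{\call^{\perp}} \calm$ contains a transversal cross through $h_1 = 0$ for \emph{every} neighborhood $U_1$, hence is never a one-dimensional submanifold, and your uniform radius $\rho$ is powerless here because the offending points are not $\call$-translates of points near $h_0$. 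You are in good company: the paper's own proof has the same lacuna (it asserts $(U_1 \oplus \widetilde{U}_2) \cap \calm = U_0' \cap \calm$ and defines $\caln$ through the global projection without justification). Both proofs, and the literal statement, are repaired by setting $\caln := U_1 \cap \Pi_{\call^{\perp}}(U_0 \cap \calm)$, which is exactly what your product chart produces; this patched version suffices downstream, since Theorem~\ref{thm-local}(1)(b) does not specify $\caln$, and in Proposition~\ref{prop-m-l} the global tangency together with part (1), i.e. $\calm = \calm + \call$, restores the identity with the global projection.
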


\begin{proof}
Setting $p := \dim \call$, there exists an orthonormal basis $\{ e_1,\ldots,e_m \}$ of $T_{h_0} \calm$ such that $\{ e_1,\ldots,e_p \}$ is an orthonormal basis of $\call$. According to \cite[Lemma 6.1.2]{fillnm} there exists a parametrization $\phi : V' \subset \bbr^m \to U' \cap \calm$ around $h_0$ with $U' \subset U$ such that
\begin{align}\label{para-Damir}
\phi( \langle e,h \rangle ) = h \quad \text{for all $h \in U' \cap \calm$,}
\end{align}
where we use notation $\langle e,h \rangle := (\langle e_1,h \rangle,\ldots,\langle e_m,h \rangle) \in \bbr^m$. Since $U' \subset H$ is an open neighborhood of $h_0$, there exist open neighborhoods $U_{1}' \subset \call^{\perp}$ of $h_{1}$ and $U_{2}' \subset \call$ of $h_{2}$ such that $U_1' \oplus U_2' \subset U'$. By (\ref{para-Damir}) we have
\begin{align}\label{prod-mani}
\phi^{-1}(U_1' \cap \calm) \subset \bbr^{m-p} \quad \text{and} \quad \phi^{-1}(U_2' \cap \calm) \subset \bbr^p 
\end{align}
with respect to the direct sum decomposition $\bbr^m = \bbr^{m-p} \oplus \bbr^{p}$. Since $V'$ is open in $\bbr^m$, there are open subsets $V_1 \subset \bbr^{m-p}$ and $V_2 \subset \bbr^{p}$ such that $V_0 \subset V'$, where $V_0 := V_1 \oplus V_2$. Since $\phi$ is a homeomorphism, there exists an open neighborhood $U_0'$ of $h_0$ such that $\phi(V_0) = U_0' \cap \calm$. By (\ref{prod-mani}) there exist open neighborhoods $U_1 \subset \call^{\perp}$ of $h_1$ and $\widetilde{U}_2 \subset \call$ of $h_2$ such that $(U_1 \oplus \widetilde{U}_2) \cap \calm = U_0' \cap \calm$. Setting $\caln := U_1 \cap \Pi_{\call^{\perp}} \calm$, $U_2 := \widetilde{U}_2 \cap \Pi_{\call} \calm$ and $U_0 := U_1 \oplus U_2$, we have $\Pi_{\call} U_0 = U_2$ and
\begin{align*}
\phi(V_0) = U_0 \cap \calm = \caln \oplus U_2,
\end{align*}
and it follows that
\begin{align}\label{converse-impl}
U_0 \cap \calm = U_0 \cap (\caln \oplus U_2) \subset U_0 \cap (\caln \oplus \call).
\end{align}
Defining the mappings $\phi_1 := \phi|_{V_1}$ and $\phi_2 := \phi|_{V_2}$, we obtain:
\begin{itemize}
\item $\phi_1 \in C^k(V_1;\call^{\perp})$ and $\phi_2 \in C^k(V_2;\call)$, because $\phi \in C^k(V_0;H)$.

\item $\phi_1 : V_1 \to \caln$ and $\phi_2 : V_2 \to U_2$ are homeomorphisms, because $\phi : V_0 \to \caln \oplus U_2$ is a homeomorphism.

\item For all $y_1 \in V_1$ and $y_2 \in V_2$ the mappings $D \phi_1(y_1)$ and $D \phi_2(y_2)$ are one to one, because
\begin{align*}
D \phi(y_1 + y_2) = D \phi_1(y_1) + D \phi_2(y_2)
\end{align*}
is one to one.
\end{itemize}
Therefore, $\caln$ is a $(m-p)$-dimensional submanifold of $\call^{\perp}$ with parametrization $\phi_1$, and $U_2$ is a $p$-dimensional submanifold of $\call$ with parametrization $\phi_2$. Furthermore, by (\ref{para-Damir}) there is an isomorphism $T : \bbr^p \to \call$ such that $\phi_2 = T|_{V_2}$, and hence, we have
\begin{align*}
\phi(y_1 + y_2) = \phi_1(y_1) + T y_2 \quad \text{for all $y_1 \in V_1$ and $y_2 \in V_2$.}
\end{align*}
Now, we will show that
\begin{align}\label{inclusion}
U_0 \cap ( (U_0 \cap \calm) + \call ) \subset U_0 \cap \calm.
\end{align}
Indeed, let $h \in U_0 \cap \calm$ and $g \in \call$ be such that $h + g \in U_0$. Then there exist unique $y_1 \in V_1$, $y_2 \in V_2$ and $z_2 \in \bbr^p$ such that $h = \phi_1(y_1) + T y_2$ and $g = Tz_2$, and we obtain
\begin{align*}
h + g = \phi_1(y_1) + T (y_2 + z_2). 
\end{align*}
Since $h + g \in U_0$ and $\Pi_{\call} U_0 = U_2$, we have $T(y_2 + z_2) \in U_2$. Therefore, and since $T : \bbr^p \to \call$ is an isomorphism, we obtain $y_2+z_2 \in V_2$, and hence
\begin{align*}
h + g = \phi(y_1+(y_2+z_2)) \in U_0 \cap \calm,
\end{align*}
proving (\ref{inclusion}). In order to prove the converse inclusion of (\ref{converse-impl}), let $h \in \caln$ and $g \in \call$ be such that $h+g \in U_0$. There exists $f \in \call$ such that $h + f \in U_0 \cap \calm$. Thus, we have $h+g = (h+f) + (g-f) \in U_0 \cap \calm + \call$. Since $h+g \in U_0$, by (\ref{inclusion}) we obtain $h+g \in U_0 \cap \calm$, completing the proof.
\end{proof}

\begin{proposition}\label{prop-m-l}
Suppose that $\calm$ is closed as a subset of $H$, and let $\call \subset H$ be a subspace such that 
\begin{align}\label{call-tang-global}
\call \subset T_h \calm \quad \text{for all $h \in \calm$.} 
\end{align}
Then the following statements are true:
\begin{enumerate}
\item We have $\calm = \calm + \call$.

\item The subset $\caln := \Pi_{\call^{\perp}} \calm$ is a $C^k$-submanifold of $\call^{\perp}$ with $\dim \caln = \dim \calm - \dim \call$, and we have $\calm = \caln \oplus \call$.
\end{enumerate}
\end{proposition}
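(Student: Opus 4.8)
The plan is to first establish statement (1), the global translation invariance $\calm = \calm + \call$, and then to read off statement (2) from it together with the local decomposition furnished by Proposition \ref{prop-m-l-loc}.

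For statement (1), the inclusion $\calm \subset \calm + \call$ is trivial since $0 \in \call$, so I would concentrate on $\calm + \call \subset \calm$; equivalently, I fix $h \in \calm$ and show that the whole affine subspace $h + \call$ is contained in $\calm$. To this end I consider
\[
A := \{ g \in \call : h + g \in \calm \}
\]
and argue that $A$ is a nonempty, open and closed subset of the connected space $\call$, whence $A = \call$. Nonemptiness is clear, as $0 \in A$. For openness I invoke Proposition \ref{prop-m-l-loc}: given $g \in A$, the point $h+g$ lies in $\calm$, and since $\call \subset T_{h'} \calm$ for every $h' \in \calm$ by the hypothesis (\ref{call-tang-global}), the assumption (\ref{L-tang}) holds at $h+g$, so there is an open neighborhood $U_0$ of $h+g$ with $U_0 \cap \calm = U_0 \cap ( (U_0 \cap \calm) + \call )$. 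For $g'$ sufficiently close to $g$ in $\call$ one has $h + g' \in U_0$ and $h + g' = (h+g) + (g'-g) \in (U_0 \cap \calm) + \call$, hence $h + g' \in U_0 \cap \calm \subset \calm$, i.e.\ $g' \in A$. Closedness is the point at which the hypothesis that $\calm$ is closed in $H$ enters: if $g_n \in A$ with $g_n \to g$, then $h + g_n \in \calm$ and $h + g_n \to h + g$, so $h + g \in \calm$ and $g \in A$. I expect this clopen-in-a-connected-space continuation argument, and in particular the indispensable use of closedness of $\calm$ to prevent the translated points from escaping the manifold, to be the main (and essentially the only) substantial point of the proof.

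Granting statement (1), statement (2) follows by bookkeeping. First, $\caln = \Pi_{\call^{\perp}} \calm \subset \calm$: for $n = \Pi_{\call^{\perp}} h$ with $h \in \calm$ one has $n = h + (-\Pi_{\call} h) \in \calm + \call = \calm$. The identity $\calm = \caln \oplus \call$ is then immediate from the two inclusions: any $h \in \calm$ decomposes as $h = \Pi_{\call^{\perp}} h + \Pi_{\call} h \in \caln \oplus \call$, and conversely, for $n \in \caln$ and $l \in \call$ I pick $h \in \calm$ with $\Pi_{\call^{\perp}} h = n$ and write $n + l = h + (l - \Pi_{\call} h) \in \calm + \call = \calm$ by statement (1).

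It remains to verify that $\caln$ is a $(\dim \calm - \dim \call)$-dimensional $C^k$-submanifold of $\call^{\perp}$, for which I would use that being a submanifold is a local property in the sense of Definition \ref{def-submanifold}. Fix $n_0 \in \caln \subset \calm$. Applying Proposition \ref{prop-m-l-loc} at $h_0 := n_0$ (which is legitimate since (\ref{L-tang}) holds on all of $\calm$) produces an open neighborhood $U_1 \subset \call^{\perp}$ of $\Pi_{\call^{\perp}} n_0 = n_0$ such that $U_1 \cap \Pi_{\call^{\perp}} \calm = U_1 \cap \caln$ is a $C^k$-submanifold of $\call^{\perp}$ of dimension $\dim \calm - \dim \call$, together with the parametrization $\phi_1$ constructed there. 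Since $\phi_1$ is a homeomorphism onto $U_1 \cap \caln$ with injective differential, it is a parametrization of $\caln$ around $n_0$; as $n_0 \in \caln$ was arbitrary, $\caln$ is a $C^k$-submanifold of $\call^{\perp}$ of the asserted dimension, which completes the plan.
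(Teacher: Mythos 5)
Your proposal is correct and follows essentially the same route as the paper: both deduce part (1) from Proposition \ref{prop-m-l-loc} together with closedness of $\calm$ via a continuation argument (the paper phrases it as a first-exit-time contradiction along the segment $\{ h + sg : s \in [0,1] \}$, while you run the equivalent clopen-in-connected argument on $A \subset \call$), and both obtain part (2) by the same bookkeeping with $\Pi_{\call^{\perp}}$. Your explicit localization of Proposition \ref{prop-m-l-loc} at each $n_0 \in \caln$ to verify the submanifold property merely spells out a step the paper leaves implicit.
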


\begin{proof}
In order to prove $\calm + \call \subset \calm$, let $h \in \calm$ and $g \in \call$ be arbitrary, and suppose that $h + g \notin \calm$. We define $t \in [0,1]$ as
\begin{align*}
t := \inf \{ s \in [0,1] : h + sg \notin \calm \},
\end{align*}
and set $h_0 := h + t g$. Since $\calm$ is closed as a subset of $H$, we have $h_0 \in \calm$, which implies $t < 1$. Furthermore, there exists a sequence $(s_n)_{n \in \bbn} \subset (0,\infty)$ with $s_n \to 0$ such that $h_0 + s_n g \notin \calm$ for all $n \in \bbn$. By Proposition \ref{prop-m-l-loc} there exists an open neighborhood $U$ of $h_0$ such that
\begin{align*}
U \cap \calm = U \cap ((U \cap \calm) + \call),
\end{align*}
which contradicts $h_0 + s_n g \notin \calm$ for all $n \in \bbn$, establishing the first statement.

According to Proposition \ref{prop-m-l-loc}, the subset $\caln := \Pi_{\call^{\perp}} \calm$ is a $C^k$-submanifold of $\call^{\perp}$ with $\dim \caln = \dim \calm - \dim \call$. Furthermore, we have $\calm \subset \caln \oplus \call$. In order to prove the converse inclusion $\caln \oplus \call \subset \calm$, let $h \in \caln$ and $g \in \call$ be arbitrary. There exists $f \in \call$ such that $h + f \in \calm$. Thus, we have $h+g = (h+f) + (g-f) \in \calm + \call$, and we obtain $\caln \oplus \call \subset \calm + \call = \calm$, establishing the second statement.
\end{proof}

\begin{proposition}\label{prop-connected}
Suppose that the submanifold $\calm$ is connected as a topological subspace of $H$, and let $d \in \bbn_0$ be such that $\fl \calm(h_0) = d$ for each $h_0 \in \calm$. Then there exist a subspace $\call \subset H$ with $\dim \call = d$ and a finite dimensional $C^k$-submanifold $\caln$ of $\call^{\perp}$ with $\dim \caln = m - d$ such that $\calm = \caln \oplus \call$.
\end{proposition}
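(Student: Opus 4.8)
The plan is to promote the pointwise flatness hypothesis to a single globally defined tangent subspace and then to apply Proposition~\ref{prop-m-l}. First I would attach to each $h_0 \in \calm$ a canonical subspace. For an open neighborhood $U$ of $h_0$ set $L(U) := \bigcap_{h \in U \cap \calm} T_h \calm$; shrinking $U$ only enlarges $L(U)$, and since every tangent space is $m$-dimensional the dimension stabilizes, so I may fix a neighborhood $U_{h_0}$ on which $\dim L(U)$ is maximal and put $\call_{h_0} := L(U_{h_0})$. The key point is that $\dim \call_{h_0} = d$. Indeed, $\call_{h_0}$ is a subspace lying in $T_h\calm$ for every $h \in U_{h_0} \cap \calm$, so $\dim \call_{h_0} \leq \fl \calm(h_0) = d$; conversely, the definition of $\fl \calm(h_0) = d$ furnishes some $d$-dimensional subspace contained in all tangent spaces over a neighborhood, hence contained in $\call_{h_0}$ after shrinking, giving $\dim \call_{h_0} \geq d$.

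Next I would show that $h \mapsto \call_h$ is locally constant. If $h_1 \in U_{h_0} \cap \calm$, then $\call_{h_0} \subset T_h \calm$ for all $h$ in a neighborhood of $h_1$, whence $\call_{h_0} \subseteq \call_{h_1}$; as both are $d$-dimensional we get $\call_{h_0} = \call_{h_1}$. Consequently $h \mapsto \call_h$ is constant on $U_{h_0} \cap \calm$, so for a fixed base point $h_0$ the set $\{ h \in \calm : \call_h = \call_{h_0} \}$ is both open and closed in $\calm$. Since $\calm$ is connected, this set equals $\calm$, and therefore there is a single $d$-dimensional subspace $\call$ with $\call \subset T_h \calm$ for all $h \in \calm$.

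With this global tangency in hand, Proposition~\ref{prop-m-l} applies directly: it gives that $\caln := \Pi_{\call^{\perp}}\calm$ is a $C^k$-submanifold of $\call^{\perp}$ with $\dim \caln = \dim \calm - \dim \call = m - d$ and that $\calm = \caln \oplus \call$, which is precisely the assertion.

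I expect the heart of the argument to be the first two paragraphs, namely identifying the canonical subspace, verifying that its dimension is exactly $d$ (this is where both halves of the definition of flatness must be used), and checking local constancy so that connectedness can globalize it. One should also keep in mind that the final step relies, through Proposition~\ref{prop-m-l}, on $\calm$ being closed as a subset of $H$; this hypothesis is genuinely needed, since an open line segment in $\bbr^2$ has constant flatness yet is not a global direct sum, and it is available here from our standing assumptions.
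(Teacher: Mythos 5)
Your proof is correct, and it reaches the paper's conclusion by a genuinely different route for the key globalization step. The paper works with the (a priori non-canonical) $d$-dimensional witnesses $\call_{h_0}$ supplied by the definition of $\fl \calm(h_0) = d$ and proves $\call_{g_0} = \call_{h_0}$ for all $g_0, h_0 \in \calm$ by a path argument: local path-connectedness plus connectedness yields path-connectedness, the compact image $f(I)$ of a connecting path is covered by finitely many neighborhoods $U_{g_k}$, and an inductive chain construction produces overlap points $f(s)$ at which two subspaces are simultaneously tangent on a common open set, so that $\call_{g_{\pi(i)}} + \call_{g_{\pi(k+1)}} \subset T_h \calm$ for all $h \in U \cap \calm$ and the equality $\fl \calm(f(s)) = d$ forces $\call_{g_{\pi(i)}} = \call_{g_{\pi(k+1)}}$. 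You instead canonicalize the subspace first, setting $\call_{h_0} := \bigcap_{h \in U_{h_0} \cap \calm} T_h \calm$ for a neighborhood maximizing the (integer-valued, bounded by $m$) dimension of this intersection, verify $\dim \call_{h_0} = d$ using both halves of the flatness definition, and then run the standard locally-constant/clopen argument on the connected space $\calm$. The underlying mechanism is identical in both proofs --- when $\fl \calm(h) = d$, two $d$-dimensional subspaces tangent on a common neighborhood must coincide, since their sum is again tangent there --- but your packaging replaces path-connectedness, compactness and the covering induction by a shorter and topologically more standard argument, at the cost of the small extra bookkeeping needed to show the maximal intersection is well defined and locally constant (your local-constancy step silently reuses the maximality of $\call_{h_1}$, which is fine but worth spelling out). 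Your closing remark about closedness is well taken: Proposition~\ref{prop-m-l} explicitly assumes $\calm$ closed in $H$, the statement of Proposition~\ref{prop-connected} does not repeat this hypothesis even though its proof (like yours) needs it, your open-segment example shows it cannot be dropped, and in the body of the paper it holds by standing assumption.
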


\begin{proof}
For each $h_0 \in \calm$ there exist a $d$-dimensional subspace $\call_{h_0} \subset H$ and an open neighborhood $U_{h_0}$ of $h_0$ such that
\begin{align}\label{tangent-connected}
\call_{h_0} \subset T_h \calm \quad \text{for all $h \in U_{h_0} \cap \calm$.}
\end{align}
We will show that
\begin{align}\label{call-g-h}
\call_{g_0} = \call_{h_0} \quad \text{for all $g_0,h_0 \in \calm$.}
\end{align}
Let $g_0,h_0 \in \calm$ be arbitrary. Since the submanifold $\calm$ is locally path-connected and connected, it is even path-connected, see, for example, \cite[Prop. 1.6.7]{Abraham}. Thus, there exists a continuous function $f : I \to \calm$ with $f(0) = g_0$ and $f(1) = h_0$, where $I = [0,1]$. Since the graph $f(I) \subset \calm$ is compact, there exist an integer $n \in \bbn$ and elements $g_1,\ldots,g_n \in f(I)$ with $g_n = h_0$ such that
\begin{align*}
f(I) = f(I) \cap \bigg( \bigcup_{k=0}^n U_{g_k} \bigg).
\end{align*}
We define an integer $e \in \{ 1,\ldots,n \}$, elements $0 = t_0 < \ldots < t_e \leq 1$ and pairwise different $\pi(0),\ldots,\pi(e) \in \{ 0,\ldots,n \}$ with $\pi(0) = 0$, $\pi(e) = n$ and $f(t_k) \in \bigcup_{i=0}^k U_{g_{\pi(i)}}$, $f(t_k) \notin \bigcup_{i=0}^{k-1} U_{g_{\pi(i)}}$ for $k=0,\ldots,e$ inductively as follows:
\begin{itemize}
\item We set $t_0 := 0$ and $\pi(0) := 0$.

\item For the induction step $k \to k+1$ let $k \in \{ 0,\ldots,n-1 \}$ be arbitrary.
\begin{itemize}
\item If $f(t_k) \in U_{h_0}$, then we set $e := k$.

\item Otherwise, we define $t_{k+1} \in [t_k,1]$ as
\begin{align}\label{def-tk}
t_{k+1} := \inf \bigg\{ t \in [t_k,1] : f(t) \notin \bigcup_{i=0}^k U_{g_{\pi(i)}} \bigg\}.
\end{align}
By the continuity of $f$ we have 
\begin{align*}
t_{k+1} > t_k \quad \text{and} \quad f(t_{k+1}) \notin \bigcup_{i=0}^k U_{g_{\pi(i)}}. 
\end{align*}
Thus, there exists an index $l \in \{ 1,\ldots,n \}$ with $l \notin \{ \pi(1),\ldots,\pi(k) \}$ such that $f(t_{k+1}) \in U_{g_l}$. We set $\pi(k+1) := l$.
\end{itemize}
\end{itemize}
Now, by induction we prove that
\begin{align}\label{spaces-equal}
\call_{g_{\pi(0)}} = \call_{g_{\pi(k)}} \quad \text{for all $k=0,\ldots,e$.}
\end{align}
For the induction step $k \to k+1$, by the definition (\ref{def-tk}) of $t_{k+1}$ we have
\begin{align*}
f(s) \in \bigcup_{i=0}^k U_{g_{\pi(i)}} \quad \text{for all $s \in [t_k, t_{k+1})$.}
\end{align*}
Moreover, by the continuity of $f$ there exists $\delta > 0$ with $t_k < t_{k+1} - \delta$ such that
\begin{align*}
f(s) \in U_{g_{\pi(k+1)}} \quad \text{for all $s \in (t_{k+1} - \delta, t_{k+1}]$.}
\end{align*}
Therefore, we obtain
\begin{align*}
f(s) \in \bigcup_{i=0}^k \big( U_{g_{\pi(i)}} \cap U_{g_{\pi(k+1)}} \big) \quad \text{for all $s \in (t_{k+1} - \delta, t_{k+1})$.}
\end{align*}
Hence, there exist $i \in \{ 0,\ldots,k \}$ and $s \in (t_{k+1} - \delta, t_{k+1})$ such that $U := U_{g_{\pi(i)}} \cap U_{g_{\pi(k+1)}}$ is an open neighborhood of $f(s)$. By (\ref{tangent-connected}) we obtain
\begin{align*}
\call_{g_{\pi(i)}} + \call_{g_{\pi(k+1)}} \subset T_h \calm \quad \text{for all $h \in U \cap \calm$.}
\end{align*}
Since $\fl \calm(f(s)) = d$, we deduce that $\call_{g_{\pi(i)}} = \call_{g_{\pi(k+1)}}$, which completes the induction step, and establishes (\ref{spaces-equal}), whence we arrive at (\ref{call-g-h}). Therefore, and by (\ref{tangent-connected}) there exists a $d$-dimensional subspace $\call$ such that (\ref{call-tang-global}) is fulfilled. Consequently, applying Proposition \ref{prop-m-l} finishes the proof.
\end{proof}

\end{appendix}

\end{document}